\definecolor{refblue}{RGB}{26,13,171}
\def\cA{{\mathcal A}}
\def\cK{{\mathcal K}}
\def\cO{{\mathcal O}}
\def\cS{{\mathcal S}}
\def\cU{{\mathcal U}}
\newcommand{\trace}{{\sf trace}}
\renewcommand\Re{{\mathbb R}}
\newcommand{\dist}{{\sf dist}}
\newtheorem{theorem}{Theorem}[section]
\newtheorem{lemma}[theorem]{Lemma}
\newtheorem{proposition}[theorem]{Proposition}
\newtheorem{assumption}[theorem]{Assumption}
\begin{document}

\begin{frontmatter}

\title{On Error Bounds for Rank-Constrained Affine Matrix Sets}

\author[tsinghua,polyu]{Ruoning Chen}

\author[polyu]{Defeng Sun\fnref{funding_ds}}

\author[tsinghua]{Liping Zhang\corref{cor_author}\fnref{funding_lz}}

\address[tsinghua]{Department of Mathematical Sciences, Tsinghua University, Beijing, China}
\address[polyu]{Department of Applied Mathematics, The Hong Kong Polytechnic University, Hung Hom, Hong Kong}

\cortext[cor_author]{Corresponding Author. \emph{Email address:} lipingzhang@tsinghua.edu.cn}
\fntext[funding_ds]{Defeng Sun is partially supported by the Research Center for Intelligent Operations Research and RGC Senior Research Fellow Scheme No. SRFS2223-5S02.}
\fntext[funding_lz]{Liping Zhang is funded by the National Natural Sciences Foundation of China (Grant No. 12571323).}

\begin{abstract}
Rank-constrained matrix problems appear frequently across science and engineering. The convergence analysis of iterative algorithms developed for these problems often hinges on local error bounds, which correlate the distance to the feasible set with a measure of how much the constraints are violated. Foundational results in semi-algebraic geometry guarantee that such bounds exist, yet the associated exponents are generally not explicitly determined. This paper establishes a local H\"olderian error bound with an explicit exponent for the canonical rank-constrained affine feasibility set. This paper proves that, on any compact set, the distance to the feasible set is bounded by a power of a natural residual function capturing violations in both the rank and affine constraints. The exponent in this bound is given explicitly in terms of the problem's dimensions. This provides a fundamental quantitative result on the geometry of the solution set, paving the way for the convergence analysis of a broad class of numerical methods.
\end{abstract}

\begin{keyword}
Error bound \sep Low-rank constraints \sep {\L}ojasiewicz inequality
\MSC[2020] 14P10 \sep 49J53 \sep 90C26
\end{keyword}

\end{frontmatter}

\section{Introduction}

The set of rank-constrained matrices satisfying a system of linear equations,
\begin{equation}
\cS = \{X \in \mathbb{R}^{m \times n} \mid \mathcal{A}(X) = b, \text{rank}(X) \le r\},
\label{eq:set_S}
\end{equation}
forms the feasibility region for a vast number of problems in modern data analysis and engineering. Applications include signal and image processing \cite{DavenportRomberg2016}, system identification and control \cite{FazelHindiBoyd2001}, and collaborative filtering, where it appears in the well-known matrix completion problem \cite{CandesRecht2012}. Despite its importance, the set \(\cS\) is notoriously challenging from a computational standpoint. The rank constraint introduces a non-convex, combinatorial structure, rendering the general feasibility problem NP-hard \cite{VandenbergheBoyd1996}.

A central question in the analysis of optimization algorithms is the rate of convergence to a solution set. The theory of error bounds provides a powerful framework for addressing this question by relating the distance from an arbitrary point to the solution set (i.e., \(\mathrm{dist}(X, \cS)\)) to the value of a residual function that measures constraint violation \cite{Pang1997,RockafellarWets1998}. For an iterative algorithm that generates a sequence \(\{X_k\}\), an error bound of the form \(\mathrm{dist}(X_k, \cS) \le \kappa \cdot \text{residual}(X_k)^\tau\) can provide a direct path to establishing convergence rates, showing that driving the residual to zero forces the iterates to approach the feasible set \(\cS\).

The study of error bounds for problems involving low-rank matrices has primarily proceeded along algorithmic lines. A significant body of work has analyzed convex relaxations, where the rank constraint is replaced by a constraint on the nuclear norm \cite{RechtFazelParrilo2010}. For solutions derived from such methods, error bounds have been established, often under statistical assumptions on the data or structural properties of the linear map \(\mathcal{A}\), such as the Restricted Isometry Property (RIP) \cite{RechtFazelParrilo2010}. These results, however, characterize the properties of solutions to an auxiliary problem rather than providing a direct geometric property of the original non-convex set~\eqref{eq:set_S}.

For non-convex methods that operate directly on low-rank factorizations (i.e., \(X = UV^T\)), local error bounds and related geometric properties like the quadratic growth condition or the Polyak-{\L}ojasiewicz (PL) inequality have been established. These results typically demonstrate that, near a solution, the objective landscape is well-behaved, which guarantees linear convergence for methods like gradient descent \cite{JainNetrapalliSanghavi2013,SunQuWright2018}. This line of inquiry provides crucial insight into algorithmic performance but couches the analysis in terms of a surrogate optimization landscape, not the geometry of the fundamental constraint set~\eqref{eq:set_S}.

Our work departs from this algorithmic focus to study the intrinsic geometry of the set~\eqref{eq:set_S} itself. Since \(\cS\) is a semi-algebraic set (defined by polynomial equalities and inequalities), the foundational work of {\L}ojasiewicz guarantees the existence of a H\"olderian error bound, often called a {\L}ojasiewicz inequality \cite{BochnakCosteRoy1998}. This classic result ensures that for a point \(X_0 \in \cS\) and a residual function \(f\), there exists a neighborhood of \(X_0\), a constant \(c\), and an exponent \(\tau \in (0, 1]\) such that \(\mathrm{dist}(X, \cS) \le c \cdot f(X)^\tau\) for all \(X\) in that neighborhood. While this establishes existence, the exponent \(\tau\) is generally non-constructive and can be arbitrarily close to zero, providing a weak guarantee.

The primary contribution of this paper is to establish an explicit, local H\"olderian error bound for the set~\eqref{eq:set_S}. We consider a natural residual function that combines the affine and rank constraint violations:
\[
    f(X) := \sum_{i=n-r+1}^{n} \sigma_i^2(X) + \frac{1}{2}\|\mathcal{A}(X) - b\|^2,
\]
where \(\sigma_i(X)\) denotes the \(i\)-th singular value of \(X\). Our main result, Proposition \ref{prop:EB}, shows that for any compact set \(\cK\), there exist constants \(c > 0\) and \(\tau > 0\), with \(\tau\) explicitly given in terms of the problem dimensions, such that
\[
    c \cdot \mathrm{dist}(X, \cS) \le f(X)^\tau \quad \text{for all } X \in \cK.
\]
This result provides a concrete, quantitative measure of the geometric regularity of the rank-constrained affine set. By providing an explicit exponent, our work moves beyond the abstract existence guarantees of classical semi-algebraic geometry and provides a foundational tool that can be used for the convergence analysis of any iterative method aiming to find a point in~\eqref{eq:set_S}.

\section{Notation and Preliminaries}

Throughout this paper, we let $\mathbb{R}^{m \times n}$ denote the space of $m \times n$ real matrices. This space is equipped with the trace inner product $\langle X, Y \rangle := \trace(X^T Y)$ and its induced norm, the Frobenius norm, denoted by $\|\cdot\|$. The singular values of a matrix $X \in \mathbb{R}^{m \times n}$ are denoted by $\sigma_1(X) \ge \sigma_2(X) \ge \dots \ge \sigma_{\min\{m,n\}}(X)$ in non-increasing order. The distance from a point $X$ to a set $A \subset \mathbb{R}^{m \times n}$ is defined as $\dist(X, A) := \inf_{Y \in A} \|X - Y\|$. The open ball centered at $X$ with radius $\epsilon$ is denoted by $\mathbb{B}(X, \epsilon)$, and its closure is $\bar{\mathbb{B}}(X, \epsilon)$. The open (closed) unit ball centered at the origin is denoted by $\mathbb{B}$ ($\bar{\mathbb{B}}$).
We use $I_k$ to denote the $k \times k$ identity matrix. For a linear map $\cA: \mathbb{R}^{m \times n} \to \mathbb{R}^l$, its adjoint is denoted by $\cA^*$.

Our analysis relies on tools from variational analysis \cite{RockafellarWets1998}. For a lower semicontinuous function $\phi: \mathbb{R}^l \to \mathbb{R}$, the Fr\'{e}chet subdifferential at $x \in \mathbb{R}^l$ is defined by
\[
    \hat{\partial} \phi(x) := \left\{ v \in \mathbb{R}^l \mid \liminf_{y \to x, y \ne x} \frac{\phi(y) - \phi(x) - \langle v, y-x \rangle}{\|y-x\|} \ge 0 \right\}.
\]
The limiting (or Mordukhovich) subdifferential, denoted by $\partial \phi(x)$, is defined as
\[
    \partial \phi(x) := \left\{ v \in \mathbb{R}^l \mid \exists x_k \overset{\phi}{\to} x, v_k \to v \text{ with } v_k \in \hat{\partial} \phi(x_k) \text{ for all } k \right\},
\]
where $x_k \overset{\phi}{\to} x$ means $x_k\to x$ and $\phi(x_k)\to \phi(x)$. If $\bar x\in \Re^l$ is a local minimum of $\phi$, then $0\in \hat{\partial}\phi(\bar x)$ by definition.
For our nonsmooth residual function $f(X)$, we define its minimal norm subgradient (or slope) as $m_f(X) := \dist(0, \partial f(X))$.

A key concept is that of semi-algebraic sets and functions. A set $A \subset \mathbb{R}^d$ is called semi-algebraic if it can be represented as a finite union of sets defined by a finite number of polynomial equalities and inequalities. A function is semi-algebraic if its graph is a semi-algebraic set. The semi-algebraic property is fundamental because it guarantees the existence of a local error bound via the {\L}ojasiewicz inequality. Our primary tool is an effective version of this inequality for polynomials, which provides an explicit exponent.

\begin{lemma}[{\cite[Theorem 4.2]{DAcuntoKurdyka2005}}]
\label{lem:poly_loja}
Let $P: \mathbb{R}^l \to \mathbb{R}$ be a polynomial of degree $\deg(P) \le d$. Let $\bar{x} \in \mathbb{R}^l$ be a critical point, i.e., $\nabla P(\bar{x}) = 0$. Then for any $r_0>0$ there exists $\epsilon>0$ and $c>0$ such that 
\[
    \|\nabla P(x)\| \ge c |P(x) - P(\bar{x})|^{1-\tau}
\]
for any $x\in \mathbb{B}(\bar x,r_0)$ with $|f(x)|<\epsilon$, where the {\L}ojasiewicz exponent $\tau$ is given by $\tau = R(l,d)^{-1}$, with $R(l, d) := d(3d-3)^{l-1}$. 
\end{lemma}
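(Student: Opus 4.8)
I may assume $\bar x = 0$ and, after subtracting the constant, $P(\bar x)=0$, so the target inequality reads $\|\nabla P(x)\| \ge c\,|P(x)|^{1-\tau}$. The constraint $|P(x)|<\epsilon$ confines $x$ to a thin tube around the critical fiber inside $\bar{\mathbb{B}}(0,r_0)$; since that fiber is compact, it suffices to prove a local inequality of the stated form near each of its points and take the worst exponent and constant, so I focus on the behaviour at $0$. I would argue by contradiction via the curve selection lemma: if the inequality failed for $\tau = R(l,d)^{-1}$ and every $c>0$, then the semialgebraic set $\Sigma = \{x : \|\nabla P(x)\| \le |P(x)|^{1-\tau}\}$ would accumulate at $0$, and curve selection produces a real-analytic arc $\gamma:[0,\delta)\to\mathbb{R}^l$ with $\gamma(0)=0$, lying in $\Sigma$, along which $\|\nabla P(\gamma(t))\|/|P(\gamma(t))|^{1-\tau}\to 0$ as $t\to 0^+$.

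Along $\gamma$ the two quantities admit Puiseux expansions $P(\gamma(t)) \sim a\,t^p$ and $\|\nabla P(\gamma(t))\| \sim b\,t^q$ with rational $p,q>0$ (and $P\circ\gamma\not\equiv 0$, since otherwise the quotient could not be small). Differentiating the first expansion and using $\tfrac{d}{dt}P(\gamma(t)) = \langle\nabla P(\gamma(t)),\gamma'(t)\rangle$ gives the elementary order estimate $q \le p-1$: the left side has order $p-1$, while the right side has order at least $q$. Hence the quotient behaves like $t^{\,q-p(1-\tau)}$, and its decay to $0$ forces $q > p(1-\tau) = p-p\tau$. Combined with $q\le p-1$ this yields $p-1 \ge q > p-p\tau$, i.e. $p\tau>1$, i.e. $p > R(l,d)$. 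The entire proof therefore reduces to the \textbf{quantitative claim} that no such arc can make $P$ vanish to order exceeding $R(l,d)$.

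To bound $p$, I would pin $\gamma$ to the \emph{talweg}: the semialgebraic curve of points where $\|\nabla P\|$ is minimized on each level set $\{P=s\}$, which the degenerating arc must asymptotically follow. By Lagrange multipliers the talweg is cut out by the vanishing of the $2\times 2$ minors of the $l\times 2$ matrix $[\,\nabla(\|\nabla P\|^2),\ \nabla P\,]$. Here $\nabla(\|\nabla P\|^2)$ has degree $2(d-1)$ and $\nabla P$ has degree $d-1$, so each minor has degree $3(d-1)=3d-3$, and the $l-1$ independent conditions define a curve of degree at most $(3d-3)^{l-1}$ by a B\'ezout-type count. Since $P$ has degree $d$, the order to which it can vanish along a branch of this curve --- an intersection multiplicity of $\{P=0\}$ with a curve of degree $(3d-3)^{l-1}$ --- is at most $d(3d-3)^{l-1}=R(l,d)$. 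This contradicts $p>R(l,d)$ and establishes the inequality with $\tau=R(l,d)^{-1}$.

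The two places where real work is hidden, and which I expect to be the main obstacles, are exactly the passages stated cleanly above. First, rigorously confining the curve-selection arc to the talweg and showing that the minimal-quotient behaviour is genuinely realized on this bounded-degree curve requires the monotonicity and trajectory-length estimates in the spirit of Kurdyka--Mostowski--Parusi\'nski, not merely the chain rule. Second, making the B\'ezout count honest over $\mathbb{R}$ and at the single point $0$ --- controlling genericity of the level-set foliation, the real branches of the talweg, and the intersection multiplicity --- is precisely where the constant $3$ in $3d-3$ must be earned rather than assumed.
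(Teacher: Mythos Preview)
The paper does not prove this lemma: it is quoted from D'Acunto--Kurdyka \cite{DAcuntoKurdyka2005} and used as a black box throughout Section~3. There is thus no ``paper's own proof'' to compare against; what you have sketched is an outline of the original D'Acunto--Kurdyka argument itself.

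Viewed that way, your roadmap is faithful to their method: contradiction via curve selection, Puiseux orders along the selected arc, the chain-rule estimate $q\le p-1$ yielding $p>R(l,d)$, and finally a bound on $p$ obtained by trapping the arc on a bounded-degree algebraic curve (the talweg) and invoking a B\'ezout-type count. One arithmetical slip: $\|\nabla P\|^2$ has degree $2(d-1)$, so its \emph{gradient} has degree $2d-3$, not $2(d-1)$; the $2\times2$ minors of $[\nabla(\|\nabla P\|^2),\nabla P]$ therefore have degree at most $3d-4$, not $3d-3$. The exact bookkeeping that produces the $3d-3$ in the stated bound is tied to the genericity/perturbation step in \cite{DAcuntoKurdyka2005}, which is precisely one of the two places you already flag as hiding real work.

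Both gaps you identify are genuine and constitute essentially the entire content of \cite{DAcuntoKurdyka2005}: confining the failing arc to a bounded-degree algebraic curve is not a consequence of curve selection alone and needs their trajectory-length analysis, and the real B\'ezout estimate at a single point requires their careful setup. Your sketch is an accurate map of the argument but, as you yourself say, not the proof.
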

For a polynomial of degree $d=4$ in $l$ variables, this exponent is $\tau = (4 \cdot 9^{l-1})^{-1}$.

\section{Deriving the Error Bound}
Without loss of generality, assume that $\cS\neq\emptyset$, $\bar X\in \cS$ and $m\ge n$. The following analysis mainly follows the analysis in \cite{DinhPham2017}. Consider the function $g:\Re^{m\times n}\times \Re^{n\times (n-r)}$, defined by
\begin{equation*}
    g(X,V)=\langle XV,XV \rangle + \frac{1}{2}\|\cA(X)-b\|^2_F.
\end{equation*}
Then $g$ is a polynomial in $n(m+n-r)$ variables of degree $4$. Moreover, we have
\begin{equation*}
    \min_{V:V^{\top}V=I_{n-r}}g(X,V)=\sum_{i=n-r+1}^n \sigma^2_i(X)+\frac{1}{2}\|\cA(X)-b\|^2=f(X),
\end{equation*}
where $\sigma_1(X)\ge \sigma_2(X)\ge \dots\ge\sigma_n(X)$ are the singular values of $X$. Therefore, it is easy to know both $f$ and $g$ are semi-algebraic functions and $S$ is a semi-algebraic set. For each $X\in \Re^{m\times n}$, denote
\begin{equation*}
    E(X)=\{V\in \Re^{n\times (n-r)}\mid g(X,V)=f(X), \,V^{\top}V=I_{n-r}\},
\end{equation*}
which is a nonempty and compact set. For simplicity, denote $\cO_{n,r}=\{V\in \Re^{n\times (n-r)}\mid V^{\top}V=I_{n-r}\}$.
Then, we have the following lemma.
\begin{lemma}\label{lem:E_Holder_stable}
    The set-valued mapping $E:\Re^{m\times n}\rightrightarrows \Re^{n\times (n-r)}, X\mapsto E(X)$ is locally H\"older stable, i.e., for any fixed $\bar X\in \Re^{m\times n}$ and $\epsilon>0$, there exist $c,\alpha>0$ such that 
    \begin{equation*}
        E(X)\subset E(\bar X)+ c\|X-\bar X\|^{\alpha}  \bar{\mathbb{B}} \quad \text{ for all } X\in \mathbb{B}(\bar X,\epsilon).
    \end{equation*}
\end{lemma}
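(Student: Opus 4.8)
The plan is to recognize $E(X)$ as the solution set of a parametrized polynomial minimization problem on the compact manifold $\cO_{n,r}$, and then invoke a quantitative (Hölderian) version of the solution-map stability available for semi-algebraic set-valued mappings. Concretely, fix $\bar X$ and $\epsilon>0$, and work on the compact set $\bar{\mathbb{B}}(\bar X,\epsilon)\times\cO_{n,r}$. Define $h(X):=f(X)$, which is continuous (indeed semi-algebraic) in $X$, and note that $V\in E(X)$ if and only if $g(X,V)-h(X)=0$ and $V\in\cO_{n,r}$. Thus the graph $\{(X,V):V\in E(X)\}$ is a semi-algebraic subset of the compact set $\bar{\mathbb{B}}(\bar X,\epsilon)\times\cO_{n,r}$, described by the single polynomial-type equation $g(X,V)=h(X)$ together with the polynomial equations $V^\top V=I_{n-r}$ cutting out $\cO_{n,r}$.

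The key step is then a Łojasiewicz-type inequality for the distance to this graph. Consider the function $\Phi(X,V):=\big(g(X,V)-h(X)\big)^2 + \dist(V,\cO_{n,r})^2$ (or work with $g(X,V)-\min_{W\in\cO_{n,r}}g(X,W)$ directly, which equals $g(X,V)-h(X)$ when $V\in\cO_{n,r}$). This is a nonnegative semi-algebraic function vanishing exactly on the graph of $E$. Restricting attention to $X$ near $\bar X$, one applies the semi-algebraic Łojasiewicz inequality with parameters (equivalently, the fact that the "valley function" $\psi(X,V):=\dist\big(V,E(X)\big)$ is semi-algebraic and continuous on a compact set, and vanishes precisely where $g(X,V)-h(X)=0$ with $V\in\cO_{n,r}$) to obtain exponents $c_1,\gamma>0$ with
\[
    \dist\big(V,E(X)\big) \le c_1\,\big(g(X,V)-h(X)\big)^{\gamma}
    \qquad\text{for all }(X,V)\in\bar{\mathbb{B}}(\bar X,\epsilon/2)\times\cO_{n,r}.
\]
Separately, for $V\in E(\bar X)$ one has $g(\bar X,V)=h(\bar X)$, so by Lipschitz continuity of $g$ in $X$ on the compact set and of $h=f$ in $X$ (each $\sigma_i$ is $1$-Lipschitz, and $\|\cA(X)-b\|^2$ is locally Lipschitz), $g(X,V)-h(X)=\big(g(X,V)-g(\bar X,V)\big)+\big(h(\bar X)-h(X)\big)\le L\|X-\bar X\|$ for a constant $L$ depending only on $\epsilon$. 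Chaining these two estimates, for every $V\in E(\bar X)$ we get $\dist(V,E(X))$ is small — but that is the wrong direction; the inclusion we want, $E(X)\subset E(\bar X)+c\|X-\bar X\|^\alpha\bar{\mathbb{B}}$, needs the estimate for $V'\in E(X)$. So instead, given $V'\in E(X)$ we have $g(X,V')=h(X)$, hence $g(\bar X,V')-h(\bar X) = \big(g(\bar X,V')-g(X,V')\big)+\big(h(X)-h(\bar X)\big)\le L\|X-\bar X\|$; applying the displayed Łojasiewicz inequality at the parameter $\bar X$ then yields $\dist\big(V',E(\bar X)\big)\le c_1 (L\|X-\bar X\|)^{\gamma}$, which is exactly the asserted inclusion with $\alpha=\gamma$ and $c=c_1 L^\gamma$, valid for $X\in\mathbb{B}(\bar X,\epsilon')$ for a suitably shrunk $\epsilon'$.

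The main obstacle is the displayed Łojasiewicz inequality itself, i.e., justifying that the residual $g(X,V)-h(X)$ controls a power of $\dist(V,E(X))$ uniformly over $X$ near $\bar X$. Two points require care: first, $h(X)=f(X)$ is not polynomial, only semi-algebraic, so one cannot directly invoke Lemma~\ref{lem:poly_loja}; instead one uses the general semi-algebraic Łojasiewicz inequality (e.g.\ \cite{BochnakCosteRoy1998}) applied to the continuous semi-algebraic function $(X,V)\mapsto\dist(V,E(X))$ and the continuous semi-algebraic function $(X,V)\mapsto g(X,V)-h(X)$ on the compact set, which share the same zero set there — here an explicit exponent is not needed since Lemma~\ref{lem:E_Holder_stable} only claims existence of $\alpha$. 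Second, one must ensure $E(X)$ is nonempty for all nearby $X$ (true, since $\cO_{n,r}$ is compact and $g(X,\cdot)$ is continuous, so the minimum defining $f(X)$ is attained) and that the zero set of $g(X,\cdot)-h(X)$ within $\cO_{n,r}$ is exactly $E(X)$ (immediate from the definition of $f$ as that minimum). With these in place, the compactness-plus-semi-algebraicity argument delivers the uniform Hölder estimate and the lemma follows.
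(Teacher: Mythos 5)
Your proof is correct and follows essentially the same route as the paper: apply the (general, non-effective) semi-algebraic {\L}ojasiewicz inequality at the fixed parameter $\bar X$ to the residual $g(\bar X,\cdot)-f(\bar X)$ on the compact set $\cO_{n,r}$ (the paper packages this as $H(X,V)=|g(X,V)-f(X)|+\|V^{\top}V-I_{n-r}\|_F^2$), then use local Lipschitz continuity of the residual in $X$ to bound $g(\bar X,V')-f(\bar X)$ by $L\|X-\bar X\|$ for $V'\in E(X)$ and chain the two estimates. The only differences are cosmetic (you restrict to $\cO_{n,r}$ from the outset instead of penalizing the orthogonality constraint), so no further comment is needed.
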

\begin{proof}
    We set the function $H(X,V):=|g(X,V)-f(X)|+\|V^{\top}V-I_{n-r}\|^2_F$. Then the function $H$ is semi-algebraic and locally Lipschitz. Additionally, we have $E(X)=\{V\mid H(X,V)=0\}$. Since $\cO_{n,r}$ is a compact set, it follows from the {\L}ojasiewicz inequality that there exist constants $c,\alpha>0$ such that 
    \begin{equation*}
        c\cdot\dist(V,E(\bar X))\le H(\bar X,V)^{\alpha} \quad \text{ for all } V\in \cO_{n,r}.
    \end{equation*}
    On the other hand, since $H$ is locally Lipschitz, it is therefore globally $L$-Lipschitz on the compact set $\bar{\mathbb{B}}(\bar X,\epsilon)\times \cO_{n,r}$ for some $L>0$. Thus, for $X\in \mathbb{B}(\bar X,\epsilon)$ and $V\in E(X)$, we have
    \begin{equation*}
        \begin{aligned}
            c\cdot\dist(V,E(\bar X))&\le H(\bar X,V)^{\alpha}=|H(\bar X,V)- H(X,V)|^{\alpha}\\
            &\le (L\|X-\bar X\|)^{\alpha}=L^{\alpha}\|X-\bar X\|^{\alpha}.
        \end{aligned}
    \end{equation*}
    This completes the proof.
\end{proof}
From Lemma \ref{lem:E_Holder_stable}, the following proposition is obvious.
\begin{proposition}\label{cor:Econtinuous}
    For each $\epsilon'>0$ and fixed $\bar X$, there exists a $\epsilon<\epsilon'$ such that for all $X\in \mathbb{B}(\bar X,\epsilon)$ and all $V\in E(X)$, we have
    \begin{equation*}
        \dist(V,E(\bar X))<\epsilon'.
    \end{equation*}
\end{proposition}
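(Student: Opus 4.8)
The plan is to obtain this statement as a direct quantitative consequence of the Hölder stability established in Lemma~\ref{lem:E_Holder_stable}; no new ideas are needed beyond unwinding that estimate. First I would apply Lemma~\ref{lem:E_Holder_stable} to the fixed matrix $\bar X$ with the auxiliary radius $\epsilon'$ in place of $\epsilon$, which yields a radius $\epsilon_0 \in (0,\epsilon']$ and constants $c,\alpha>0$ such that $E(X)\subset E(\bar X)+c\|X-\bar X\|^{\alpha}\bar{\mathbb{B}}$ for every $X\in\mathbb{B}(\bar X,\epsilon_0)$.

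Next I would choose $\epsilon:=\min\{\epsilon_0,\ \epsilon',\ (\epsilon'/(2c))^{1/\alpha}\}$, or any comparable choice; the point is to force simultaneously $\epsilon<\epsilon'$ (as the statement requires), $\epsilon\le\epsilon_0$ (so that the inclusion from the lemma is valid at $X$), and $c\epsilon^{\alpha}<\epsilon'$. Then for an arbitrary $X\in\mathbb{B}(\bar X,\epsilon)$ and an arbitrary $V\in E(X)$, the inclusion produces some $\bar V\in E(\bar X)$ with $\|V-\bar V\|\le c\|X-\bar X\|^{\alpha}\le c\epsilon^{\alpha}<\epsilon'$. Since $\bar V\in E(\bar X)$, this gives $\dist(V,E(\bar X))\le\|V-\bar V\|<\epsilon'$, which is exactly the claimed conclusion.

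I do not expect any genuine obstacle here: the substantive work — the existence of the Łojasiewicz exponent $\alpha$ and the Lipschitz transfer argument — has already been carried out in Lemma~\ref{lem:E_Holder_stable}. The only care required is the bookkeeping of constants, namely making sure the single radius $\epsilon$ meets all three constraints above and that the $V$ appearing in the definition of $\dist(V,E(\bar X))$ is quantified correctly (for \emph{all} $V\in E(X)$, uniformly over $X$ in the ball). I would present the argument in three or four lines accordingly.
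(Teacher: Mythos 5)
Your argument is correct and is precisely the intended one: the paper offers no written proof, stating only that the proposition is ``obvious'' from Lemma~\ref{lem:E_Holder_stable}, and your unwinding --- apply the H\"older inclusion at $\bar X$ and shrink $\epsilon$ so that $c\epsilon^{\alpha}<\epsilon'$ --- is exactly how that obviousness is meant to be cashed out. No gaps.
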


Next, we investigate the relationship between the generalized differentials of $g(X,V)$ and $f(X)$. Since $g$ is a continuously differentiable polynomial, it is natural to expect that certain structural properties of $g(X,V)$ can be transferred to $f(X)$.

\begin{lemma}
    \label{lem:gnl_diff}
    For all $X\in \Re^{m\times n}$ and $V\in E(X)$, the following statements hold:
    \begin{itemize}
        \item[\rm(i)] $\hat{\partial} f(X) \subset \{\nabla_X g(X,V)\}$. Moreover,
        \begin{equation*}
            \emptyset \neq {\partial} f(X) \subset \bigcup_{U \in E(X)} \{\nabla_X g(X,U)\} \quad \text{and} \quad
            m_{f}(X) \ge \inf_{U \in E(X)} \|\nabla_X g(X,U)\|.
        \end{equation*}
        \item[\rm(ii)] $\|\nabla_V g(X,V)\|\le 2f(X)$.
    \end{itemize}
\end{lemma}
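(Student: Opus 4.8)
I would handle the two parts separately; both ultimately rest on the elementary facts that $f(Y)\le g(Y,V)$ for every $Y$ (since $V\in\cO_{n,r}$) and $f(X)=g(X,V)$ for $V\in E(X)$. Start with (ii). Because $\tfrac12\|\cA(X)-b\|^2$ is independent of $V$, one computes $\nabla_V g(X,V)=2X^{\top}XV$. The decisive point is that $V\in E(X)$ \emph{minimizes} $g(X,\cdot)$ over the Stiefel manifold $\cO_{n,r}$, not merely lies in it: the first-order optimality condition forces $\nabla_V g(X,V)$ into the normal space $\{VS:S=S^{\top}\}$ of $\cO_{n,r}$ at $V$, i.e. $X^{\top}XV=VS$ with $S:=V^{\top}X^{\top}XV=(XV)^{\top}(XV)\succeq 0$. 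Hence $\nabla_V g(X,V)=2VS$ and $\|\nabla_V g(X,V)\|^2=4\|VS\|^2=4\,\trace(S^2)$; since $\trace(S)=\|XV\|^2=g(X,V)-\tfrac12\|\cA(X)-b\|^2\le f(X)$ and $\trace(S^2)\le\trace(S)^2$ for $S\succeq 0$, we obtain $\|\nabla_V g(X,V)\|\le 2\,\trace(S)\le 2f(X)$. (This uses minimality essentially — the estimate is false for a generic $V\in\cO_{n,r}$.)

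\emph{Part (i): the Fr\'echet inclusion.} Fix $V\in E(X)$ and take $v\in\hat{\partial}f(X)$. For any direction $D$ and $t\downarrow 0$, the definition of $\hat{\partial}f$ gives $f(X+tD)\ge f(X)+t\langle v,D\rangle+o(t)$, while $f(X+tD)\le g(X+tD,V)=g(X,V)+t\langle\nabla_X g(X,V),D\rangle+o(t)=f(X)+t\langle\nabla_X g(X,V),D\rangle+o(t)$, using feasibility of $V$, smoothness of $g$, and $g(X,V)=f(X)$. Dividing by $t$ and letting $t\to 0$ gives $\langle v,D\rangle\le\langle\nabla_X g(X,V),D\rangle$ for all $D$, hence $v=\nabla_X g(X,V)$. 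In particular $\hat{\partial}f(X)$ is either empty or the singleton $\{\nabla_X g(X,V)\}$ for every $V\in E(X)$.

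\emph{Part (i): the limiting subdifferential and $m_f$.} Since $g\in C^1$ has a bounded $X$-gradient on each compact set, the family $\{g(\cdot,W)\}_{W\in\cO_{n,r}}$ is uniformly locally Lipschitz, so $f=\min_{W\in\cO_{n,r}}g(\cdot,W)$ is locally Lipschitz; consequently $\partial f(X)\ne\emptyset$ (e.g. as a subsequential limit of $\nabla f(X_k)$ at differentiability points $X_k\to X$). For the inclusion, let $v\in\partial f(X)$, say $v=\lim v_k$ with $v_k\in\hat{\partial}f(X_k)$ and $X_k\to X$. By the Fr\'echet step, $v_k=\nabla_X g(X_k,V_k)$ for an arbitrarily chosen $V_k\in E(X_k)$. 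Since $\cO_{n,r}$ is compact, pass to a subsequence with $V_k\to V$; continuity of $g$ and of $f$ gives $g(X,V)=f(X)$ and $V^{\top}V=I_{n-r}$, so $V\in E(X)$, and continuity of $\nabla_X g$ gives $v=\nabla_X g(X,V)$. Hence $\emptyset\ne\partial f(X)\subset\bigcup_{U\in E(X)}\{\nabla_X g(X,U)\}$, and taking norms yields $m_f(X)=\dist(0,\partial f(X))\ge\inf_{U\in E(X)}\|\nabla_X g(X,U)\|$.

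\emph{Main obstacle.} The technically most delicate step is the limiting-subdifferential inclusion, which hinges on: (a) the outer semicontinuity of $X\mapsto E(X)$ — here in the weak form that $V_k\in E(X_k)$ with $X_k\to X$ implies every subsequential limit lies in $E(X)$, which is immediate from compactness of $\cO_{n,r}$ and continuity of $g,f$ and is the qualitative shadow of Lemma~\ref{lem:E_Holder_stable} and Proposition~\ref{cor:Econtinuous}; (b) continuity of $\nabla_X g$; and (c) local Lipschitzness of $f$ to guarantee $\partial f(X)\ne\emptyset$. For (ii), the subtlety is recognizing that only the minimality of $V$ supplies the identity $X^{\top}XV=VS$ with $S\succeq 0$ on which the norm estimate depends.
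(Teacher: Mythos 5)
Your proof is correct and follows essentially the same route as the paper: part (ii) via the first-order optimality condition on the Stiefel manifold giving $X^{\top}XV=VS$ with $S=(XV)^{\top}XV\succeq 0$ and the inequality $\sqrt{\trace(S^2)}\le\trace(S)$, and the Fr\'echet inclusion via the sandwich $f\le g(\cdot,V)$ with equality at $X$ (the paper packages this as a local minimum of an auxiliary function $\phi$, which is the same idea). For the limiting subdifferential you replace the paper's terse appeal to the Cell Decomposition Theorem by a direct sequential argument (local Lipschitzness of $f$ for nonemptiness, compactness of $\cO_{n,r}$ and continuity of $g$, $f$, $\nabla_X g$ for the inclusion); this is a minor variation that, if anything, fills in details the paper leaves implicit.
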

\begin{proof}
    (i) Take arbitrary $Z\in \hat{\partial} f(X)$. By definition, for any $\epsilon>0$ there exists a $\delta>0$ such that 
    \begin{equation*}
        f(X+H)-f(X)-\langle H,Z\rangle \ge -\epsilon\|H\|\quad \text{ for all } H\in \mathbb{B}(0,\delta).
    \end{equation*}
    We define the function
    \begin{equation*}
        \phi(H):=g(X+H,V)-\langle Z,H\rangle +\epsilon\|H\|.
    \end{equation*}
    Then for all $H\in \mathbb{B}(0,\delta)$, we have
    \begin{equation*}
        \begin{aligned}
            \phi(H)&\ge f(X+H)-\langle Z,H\rangle +\epsilon\|H\|\\
            &\ge f(X)=g(X,V)=\phi(0).
        \end{aligned}
    \end{equation*}
    That is to say, $0$ is the local minima of $\phi$, therefore we have
    \begin{equation*}
        0\in \hat{\partial} \phi(0)\subseteq \nabla_X g(X,V)-Z+\epsilon\bar{\mathbb{B}}.
    \end{equation*}
    Thus, we have $\|\nabla_X g(X,V)-Z\|\le \epsilon$, this holds for any $\epsilon>0$. Then $Z=\nabla_X g(X,V)$, this holds for any $Z\in \hat{\partial} f(X)$, then $\hat{\partial} f(X)\in\{\nabla_X g(X,V)\}$.
    
    On the other hand, since $f$ is semi-algebraic, it is therefore continuously differentiable on a semi-algebraic dense open set $\cU\subset \Re^{m\times n}$ from the Cell Decomposition Theorem \cite[Theorem 4.2]{VanDenDriesMiller1996}. Then for all $X\in \cU$, we have
    \begin{equation*}
        \hat{\partial}f(X)=\partial f(X)=\{\nabla_X g(X,V)\}.
    \end{equation*}
    Since $\cU$ is dense, the function $g$ is $C^{\infty}$, the multi-function $E(X)$ is compact-valued and is locally H\"older stable (Lemma \ref{lem:E_Holder_stable}), we conclude that
    \begin{equation*}
        \emptyset \neq {\partial} f(X) \subset \bigcup_{U \in E(X)} \{\nabla_X g(X,U)\},
    \end{equation*}
    and the last inequality in (i) follows immediately.

    (ii) We have $g(X,V)=f(X)=\min_{U^{\top}U=I_{n-r}} g(X,U)$. Then there exists a multiplier $Y\in I_{n-r}$ such that $X^{\top}XV=VY$. Multiplying both sides of the equation on the left by $V^{\top}$, we obtain $Y=(XV)^{\top}XV$. Therefore, 
    \begin{equation}\label{eq:1}
        \begin{aligned}
            \|\nabla_V g(X,V)\|&=\|2X^{\top}XV\|=2\|VY\|\\
            &\le \|V\|_2\|Y\|_F=\|Y\|.
        \end{aligned}
    \end{equation}
    Additionally, we have 
    \begin{equation}\label{eq:2}
        2f(X)\ge 2\langle XV,XV\rangle =2\langle VY,V\rangle=2 \,\trace(Y). 
    \end{equation}
    Since $Y=(XV)^{\top}XV$ is a positive semidefinite matrix, we have 
    \begin{equation*}
        \|Y\|=\sqrt{\trace(Y^2)}\le \trace(Y).
    \end{equation*}
    Combined with \eqref{eq:1} and \eqref{eq:2}, the proof is completed.
\end{proof}
Having established a clear linkage between $f(X)$ and $g(X,V)$, our focus now shifts to deriving the error bound for $g$ since it is a polynomial.

\begin{proposition}\label{prop:exp_g}
    There exist constants $c>0$ and $\epsilon'>0$ such that
    \begin{equation*}
        \|\nabla g(X,V)\|\ge c\cdot g(X,V)^{1-\tau}
    \end{equation*}
    for all $X\in \mathbb{B}(\bar X,\epsilon')$ and $V\in \Re^{n\times (n-r)}$ with $\dist(V,E(\bar X))<\epsilon'$, where $$\tau:=\frac{1}{R(n(m+n-r),4)}:=\frac{1}{4\cdot 9^{n(m+n-r)-1}}.$$
\end{proposition}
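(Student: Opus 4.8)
The plan is to lift the effective polynomial {\L}ojasiewicz inequality of Lemma~\ref{lem:poly_loja}, which concerns a single critical point, to the whole compact set $\{\bar X\}\times E(\bar X)$ by a finite-cover argument. First I would record that $g$ is a \emph{nonnegative} polynomial, since $g(X,V)=\langle XV,XV\rangle+\tfrac12\|\cA(X)-b\|^2\ge 0$ for all $(X,V)$. Because $\bar X\in\cS$, the residual vanishes there, i.e. $f(\bar X)=0$, and hence $g(\bar X,\bar V)=f(\bar X)=0$ for every $\bar V\in E(\bar X)$ by the definition of $E$. Consequently each such $(\bar X,\bar V)$ is a global minimizer of $g$, so $\nabla g(\bar X,\bar V)=0$; thus every point of $\{\bar X\}\times E(\bar X)$ is a critical point of $g$, and all of them share the common critical value $0$.

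Next I would invoke Lemma~\ref{lem:poly_loja} pointwise. Viewing $g$ as a polynomial of degree $4$ in $l:=n(m+n-r)$ variables, the lemma applied at the critical point $(\bar X,\bar V)$ (with, say, $r_0=1$) yields constants $\eta_{\bar V}>0$ and $\kappa_{\bar V}>0$ such that
\[
    \|\nabla g(X,V)\|\ \ge\ \kappa_{\bar V}\,\bigl|g(X,V)-g(\bar X,\bar V)\bigr|^{1-\tau}=\kappa_{\bar V}\,g(X,V)^{1-\tau}
\]
for all $(X,V)\in\mathbb{B}((\bar X,\bar V),1)$ with $g(X,V)<\eta_{\bar V}$, where $\tau=R(l,4)^{-1}$ exactly as in the statement; crucially $\tau$ depends only on $l$ and on $\deg g=4$, so it is the same at every point $\bar V\in E(\bar X)$. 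Since $g$ is continuous and $g(\bar X,\bar V)=0$, I can pass to a smaller radius $\rho_{\bar V}\in(0,1]$ on which $g<\eta_{\bar V}$ holds automatically, so the displayed inequality is valid on all of $\mathbb{B}((\bar X,\bar V),\rho_{\bar V})$ without the side condition.

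Then I would patch these local bounds. As $E(\bar X)\subset\cO_{n,r}$ is compact, so is $\{\bar X\}\times E(\bar X)$, and the open cover $\{\mathbb{B}((\bar X,\bar V),\rho_{\bar V}/2)\}_{\bar V\in E(\bar X)}$ admits a finite subcover corresponding to $\bar V_1,\dots,\bar V_N$. Put $c:=\min_{1\le j\le N}\kappa_{\bar V_j}>0$ and $\epsilon':=\tfrac14\min_{1\le j\le N}\rho_{\bar V_j}>0$. Given $(X,V)$ with $\|X-\bar X\|<\epsilon'$ and $\dist(V,E(\bar X))<\epsilon'$, choose $\bar V\in E(\bar X)$ with $\|V-\bar V\|<\epsilon'$ and then $j$ with $\|\bar V-\bar V_j\|<\rho_{\bar V_j}/2$; using $\sqrt{a^2+b^2}\le a+b$ in the product Frobenius norm one gets $\|(X,V)-(\bar X,\bar V_j)\|\le\|X-\bar X\|+\|V-\bar V\|+\|\bar V-\bar V_j\|<\rho_{\bar V_j}$, so that $(X,V)\in\mathbb{B}((\bar X,\bar V_j),\rho_{\bar V_j})$ and the previous step gives $\|\nabla g(X,V)\|\ge\kappa_{\bar V_j}g(X,V)^{1-\tau}\ge c\,g(X,V)^{1-\tau}$.

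I expect the main obstacle to be conceptual rather than computational: Lemma~\ref{lem:poly_loja} speaks about an isolated critical point, whereas $E(\bar X)$ is a positive-dimensional subset of the Stiefel manifold, so the single-point estimate must be made uniform over a full neighborhood of $\{\bar X\}\times E(\bar X)$. The finite-subcover argument achieves this, but it works precisely because (i) all points of this set carry the same critical value $0$, so the right-hand side $g^{1-\tau}$ is identical in every local inequality, and (ii) the exponent $\tau=R(l,4)^{-1}$ is fixed by the ambient dimension and degree alone, hence is uniform. A secondary technical care point is the bookkeeping of radii in the covering, ensuring both that the auxiliary threshold $\eta_{\bar V}$ is absorbed and that the neighborhood appearing in the statement — $X$ near $\bar X$ and $V$ near $E(\bar X)$ — really does sit inside one of the balls of the subcover.
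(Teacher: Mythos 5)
Your proposal is correct and follows essentially the same route as the paper: observe that every $(\bar X,\bar V)$ with $\bar V\in E(\bar X)$ is a global minimizer (hence critical point with value $0$) of the nonnegative degree-$4$ polynomial $g$, apply Lemma~\ref{lem:poly_loja} at each such point, and use compactness of $E(\bar X)$ to make the constants uniform. The paper merely states that ``the conclusion follows easily from the compactness of $E(\bar X)$,'' whereas you carry out the finite-subcover bookkeeping explicitly; this is a faithful filling-in of the same argument.
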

\begin{proof}
    We know $f(\bar X)=0$, and $E(\bar X)$ is a compact set. For any $\bar V\in E(\bar X)$, we have $g(\bar X,\bar V)=0$ and $\nabla g(\bar X,\bar V)=0$ by simple observation. Recall that $g$ is a polynomial in $n(m+n-r)$ variables of degree $d=4$. From the {\L}ojasiewicz inequality (Lemma~\ref{lem:poly_loja}), we have constants $c>0,\epsilon'>0$ such that
    \begin{equation*}
        \|\nabla g(X,V)\|\ge c\cdot g(X,V)^{1-\tau}
    \end{equation*}
    for all $X\in \mathbb{B}(\bar X,\epsilon')$ and $V\in \mathbb{B}(\bar V,\epsilon')$. The conclusion then follows easily from the compactness of $E(\bar X)$.
\end{proof}

Now we are ready for the {\L}ojasiewicz inequality of the nonsmooth function $f$. Specifically, we have the following result. 

\begin{theorem}
    \label{thm:exp_f}
    There exist constants $c>0$ and $\epsilon>0$ such that 
    \begin{equation*}
        m_f(X)\ge c\cdot f(X)^{1-\tau}
    \end{equation*}
    for all $X\in \mathbb{B}(\bar X,\epsilon)$, where $\tau:=\frac{1}{R(n(m+n-r),4)}$.
\end{theorem}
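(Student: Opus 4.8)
The plan is to transfer the polynomial {\L}ojasiewicz inequality for $g$ (Proposition~\ref{prop:exp_g}) to the nonsmooth slope $m_f$ via the subdifferential linkage of Lemma~\ref{lem:gnl_diff}, using that $\nabla_V g(X,V)$ is of strictly higher order in $f(X)$ than $f(X)^{1-\tau}$ and can therefore be absorbed. Here is the mechanism. Since the gradient of $g$ on $\Re^{m\times n}\times\Re^{n\times(n-r)}$ splits orthogonally into its $X$- and $V$-blocks, for any $U\in E(X)$ we have $g(X,U)=f(X)$ and
\begin{equation*}
    \|\nabla_X g(X,U)\|^2 = \|\nabla g(X,U)\|^2 - \|\nabla_V g(X,U)\|^2 \ge c^2 f(X)^{2(1-\tau)} - 4 f(X)^2,
\end{equation*}
where the inequality uses Proposition~\ref{prop:exp_g} for the first term and Lemma~\ref{lem:gnl_diff}(ii) for the second, provided $X$ is close enough to $\bar X$ and $\dist(U,E(\bar X))<\epsilon'$. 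Because $\tau>0$, factoring out $f(X)^{2(1-\tau)}$ leaves $c^2-4f(X)^{2\tau}$, which exceeds $c^2/2$ whenever $f(X)$ is small; hence $\|\nabla_X g(X,U)\|\ge (c/\sqrt2)\,f(X)^{1-\tau}$ uniformly over $U\in E(X)$, and then $m_f(X)\ge\inf_{U\in E(X)}\|\nabla_X g(X,U)\|$ from Lemma~\ref{lem:gnl_diff}(i) yields the claim with constant $c/\sqrt2$.

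I would carry this out in the following order. First, fix $c,\epsilon'$ from Proposition~\ref{prop:exp_g}. Second, invoke Proposition~\ref{cor:Econtinuous} to choose $\epsilon_1<\epsilon'$ so that $\dist(U,E(\bar X))<\epsilon'$ for every $X\in\mathbb{B}(\bar X,\epsilon_1)$ and every $U\in E(X)$; this is precisely what licenses applying the {\L}ojasiewicz bound for $g$ at each pair $(X,U)$. Third, using the continuity of $f$ together with $f(\bar X)=0$, shrink to a radius $\epsilon\le\epsilon_1$ on which $f(X)\le (c^2/8)^{1/(2\tau)}$. Fourth, on $\mathbb{B}(\bar X,\epsilon)$ run the displayed chain of inequalities for arbitrary $U\in E(X)$, then take the infimum over $U$ and apply Lemma~\ref{lem:gnl_diff}(i). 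The case $f(X)=0$, i.e.\ $X\in\cS$, is trivial since both sides of the asserted inequality vanish.

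I do not expect a genuine obstacle: the analytic heavy lifting has already been done in Lemma~\ref{lem:gnl_diff} and Proposition~\ref{prop:exp_g}. The step requiring the most care is the neighborhood bookkeeping --- guaranteeing that Proposition~\ref{prop:exp_g} applies \emph{uniformly} at all pairs $(X,U)$ with $U\in E(X)$, which is exactly the content that the H\"older stability of $E$ (Lemma~\ref{lem:E_Holder_stable}), distilled into Proposition~\ref{cor:Econtinuous}, is there to supply. The only other point to verify is the elementary domination of $f(X)^2$ by $f(X)^{2(1-\tau)}$ as $X\to\bar X$, which is where positivity of the {\L}ojasiewicz exponent $\tau$ is used.
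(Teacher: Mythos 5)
Your proposal is correct and follows essentially the same route as the paper: both transfer Proposition~\ref{prop:exp_g} to $m_f$ via Lemma~\ref{lem:gnl_diff}, using Proposition~\ref{cor:Econtinuous} for the neighborhood bookkeeping and absorbing the higher-order term $\|\nabla_V g(X,V)\|\le 2f(X)$ by shrinking $\epsilon$ so that $f(X)^{\tau}$ is small. The only (cosmetic) difference is that you absorb this term through the Pythagorean splitting $\|\nabla g\|^2=\|\nabla_X g\|^2+\|\nabla_V g\|^2$, whereas the paper adds $2f(X)$ to $m_f(X)$, bounds the sum below by $c_1\|\nabla g(X,V)\|$ via norm equivalence, and then subtracts; both yield the same conclusion with an explicit constant.
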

\begin{proof}
    Let $c,\epsilon'$, and $\epsilon<\epsilon'$ be the positive constants such that Propositions \ref{cor:Econtinuous} and  \ref{prop:exp_g} hold. Take arbitrary $X\in \mathbb{B}(\bar X,\epsilon)$, we have $\dist(E(X),E(\bar X))\le \epsilon'$. Since $E(X)$ is compact, there exists a $V\in E(X)$ such that $$\|\nabla_X g(X,V)\|=\inf_{U \in E(X)} \|\nabla_X g(X,U)\|.$$ It follows from Lemma \ref{lem:gnl_diff} that
    \begin{equation*}
        m_f(X)\ge \|\nabla_X g(X,V)\| \quad \text{ and }\quad \|\nabla_V g(X,V)\|\le 2f(X).
    \end{equation*}
    Since all norms on finitely dimensional normed vector spaces are equivalent, we have  
    \begin{equation*}
        \begin{aligned}
            m_f(X)+2f(X)&\ge \|\nabla_X g(X,V)\|+\|\nabla_V g(X,V)\|\ \\
            &\ge c_1\|\nabla g(X,V)\| \\
            &\overset{(a)}{\ge} c_1c\cdot g(X,V)^{1-\tau}=c_1c \cdot f(X)^{1-\tau}
        \end{aligned}
    \end{equation*}
    for some $c_1$ only determined by $m,n,r$, and $(a)$ comes from $\dist(V,E(\bar X))<\epsilon'$ and Proposition \ref{prop:exp_g}. Note that $f(\bar X)=0$, we can diminish $\epsilon$ until we may assume 
    \begin{equation*}
        f(X)^{\tau}<\frac{c_1c}{4}, \quad \forall X\in \mathbb{B}(\bar X,\epsilon)
    \end{equation*}
    if necessary. Consequently, we obtain
    \begin{equation*}
        m_f(X)\ge (c_1c-2f(X)^{\tau})f(X)^{1-\tau}\ge \frac{c_1c}{2}f(X)^{1-\tau}, \quad\forall X\in \mathbb{B}(\bar X,\epsilon).
    \end{equation*}
    This completes the proof.
\end{proof}

The above {\L}ojasiewicz inequality leads to an error bound of $\cS$. 

\begin{proposition}
    \label{prop:EB}
    For any compact set $\cK\subset \Re^{m\times n}$, there exists a constant $c>0$ such that for all $X\in \cK$, the inequality
    \begin{equation*}
        c\cdot \dist(X,\cS)\le f(X)^{\tau}
    \end{equation*}
   holds with $\tau:=\frac{1}{R(n(m+n-r),4)}$.
\end{proposition}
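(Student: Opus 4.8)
The plan is to obtain Proposition~\ref{prop:EB} from the {\L}ojasiewicz inequality of Theorem~\ref{thm:exp_f} in two stages: first a \emph{local} H\"olderian error bound at each feasible point, then a compactness argument that promotes the local estimates to a single uniform bound over $\cK$.

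For the local stage, fix $\bar X\in\cS$ and take $c,\epsilon>0$ with $m_f(X)\ge c\,f(X)^{1-\tau}$ on $\mathbb{B}(\bar X,\epsilon)$ (Theorem~\ref{thm:exp_f}). I would pass to the auxiliary function $h:=f^{\tau}$, which is continuous, nonnegative, and vanishes exactly on $f^{-1}(0)=\cS$ --- here one checks $f(X)=0$ iff $\sigma_{n-r+1}(X)=\dots=\sigma_n(X)=0$ and $\cA(X)=b$, i.e.\ $X\in\cS$. On $\mathbb{B}(\bar X,\epsilon)\setminus\cS$ the map $t\mapsto t^{\tau}$ is a $C^1$ strictly increasing reparametrization near $f(X)>0$, so the subdifferential chain rule gives $\partial h(X)=\tau f(X)^{\tau-1}\partial f(X)$, hence $m_h(X)=\tau f(X)^{\tau-1}m_f(X)\ge\tau c$. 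Thus $h$ is continuous, nonnegative, vanishes at $\bar X$, and its slope is bounded below by the positive constant $\tau c$ throughout a punctured neighborhood of $\bar X$. The classical consequence of Ekeland's variational principle --- a uniform positive lower bound on the slope forces a linear error bound for the level set at the minimal value --- then yields, after shrinking $\epsilon$ to some $\epsilon_{\bar X}>0$,
\[
    \tau c\,\dist(X,\cS)=\tau c\,\dist\bigl(X,h^{-1}(0)\bigr)\le h(X)=f(X)^{\tau}\qquad\text{for all }X\in\mathbb{B}(\bar X,\epsilon_{\bar X}).
\]

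For the global stage, I would argue by contradiction: if no constant works on $\cK$, there are $X_k\in\cK$ with $\dist(X_k,\cS)>k\,f(X_k)^{\tau}$. Then $X_k\notin\cS$ (else $0>0$), so $\dist(X_k,\cS)>0$ and $f(X_k)>0$; fixing any $X_0\in\cS$ (nonempty by the standing assumption) gives $\dist(X_k,\cS)\le\|X_k-X_0\|\le M:=\sup_{X\in\cK}\|X-X_0\|<\infty$. Extract a subsequence $X_k\to X^*\in\cK$ with $f(X_k)\to f(X^*)$ by continuity. If $f(X^*)>0$ then $k\,f(X_k)^{\tau}\to\infty$ contradicts $\dist(X_k,\cS)\le M$; if $f(X^*)=0$ then $X^*\in\cS$, and the local bound at $\bar X=X^*$ gives $c^*,\epsilon_{X^*}>0$ with $c^*\dist(X,\cS)\le f(X)^{\tau}$ on $\mathbb{B}(X^*,\epsilon_{X^*})$, so for large $k$ (when $X_k$ enters this ball) $c^*\dist(X_k,\cS)\le f(X_k)^{\tau}<\tfrac1k\dist(X_k,\cS)$, forcing $c^*<1/k$ --- impossible. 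Hence some $c>0$ satisfies $c\,\dist(X,\cS)\le f(X)^{\tau}$ on all of $\cK$.

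I expect the only real obstacle to be the local stage, i.e.\ converting the {\L}ojasiewicz slope inequality $m_f\gtrsim f^{1-\tau}$ into the error bound $\dist(\cdot,\cS)\lesssim f^{\tau}$. The two points needing care are: (i) that the reparametrization $h=f^{\tau}$ legitimately turns the slope bound into the clean estimate $m_h\ge\tau c$ --- valid precisely because the chain rule is only invoked where $f>0$, where $t\mapsto t^{\tau}$ is smooth and strictly increasing; and (ii) that the Ekeland-based descent step keeps its auxiliary point inside the neighborhood on which the slope bound is available, which is arranged by shrinking $\epsilon$. The remaining ingredients --- the identity $f^{-1}(0)=\cS$, the boundedness of $\dist(\cdot,\cS)$ on the compact set $\cK$, and the subsequence extraction --- are routine.
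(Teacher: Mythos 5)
Your argument is correct and follows the same route as the paper: the paper's proof is a one-line citation of Theorem~\ref{thm:exp_f}, of \cite[Lemma 2.2]{DinhPham2017} (which is exactly the Ekeland-based conversion of the slope inequality $m_f\ge c\,f^{1-\tau}$ into the local bound $\dist(\cdot,\cS)\lesssim f^{\tau}$ that you carry out via $h=f^{\tau}$), and of the compactness of $\cK$. Your two-stage write-up simply makes those ingredients explicit, and both stages are sound.
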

\begin{proof}
    This can be derived by Theorem \ref{thm:exp_f}, \cite[Lemma 2.2]{DinhPham2017}, and the compactness of $\cK$.
\end{proof}

The potential looseness of the error bound in Proposition \ref{prop:EB} is a direct consequence of its reliance on the {\L}ojasiewicz exponent for polynomials, whose tightness still remains unknown. When the function $f$ satisfies certain regularity condition, a global bound can be established in the following. The proof of Theorem \ref{thm:globalEB} is omitted as it follows the same arguments used for \cite[Proposition 6.1]{DinhPham2017}.

\begin{assumption}\label{ass:regularity}
    For the function f, there exist constants $c>0$ and $R>0$ such that for all $\|X\|\ge R$, the inequality 
    \begin{equation*}
        m_f(X)\ge c
    \end{equation*} holds.
\end{assumption}

\begin{theorem}
    \label{thm:globalEB}
    If Assumption \ref{ass:regularity} holds, then the set $\cS$ is compact and there exists a constant $c>0$ such that for any $X\in \Re^{m\times n}$, the inequality
    \begin{equation*}
        c\cdot \dist (X,\cS)\le f(X)^{\tau} +f(X) 
    \end{equation*}
    holds with $\tau:=\frac{1}{R(n(m+n-r),4)}$.
\end{theorem}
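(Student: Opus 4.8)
The plan is to splice together two regimes. On bounded sets the desired inequality is already the $f(X)^{\tau}$ estimate of Proposition~\ref{prop:EB}; at infinity, Assumption~\ref{ass:regularity} forces $f$ to grow at least linearly, and this is what produces the $f(X)$ term. First I would establish that $\cS$ is compact. The set $\cS = f^{-1}(0)$ is closed because $f$ is continuous, and it is nonempty by the standing hypothesis $\cS \neq \emptyset$. It is also bounded: every $X \in \cS$ is a global minimizer of the nonnegative function $f$, so $0 \in \hat{\partial} f(X) \subseteq \partial f(X)$ and hence $m_f(X) = \dist(0, \partial f(X)) = 0$, which Assumption~\ref{ass:regularity} forbids once $\|X\| \ge R$. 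Thus $\cS \subseteq \bar{\mathbb{B}}(0, R)$, so $\cS$ is compact.

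Next, Proposition~\ref{prop:EB} applied to the compact set $\cK := \bar{\mathbb{B}}(0, R)$ provides a constant $c_1 > 0$ with $c_1 \dist(X, \cS) \le f(X)^{\tau}$ whenever $\|X\| \le R$, which handles that case outright. For $\|X\| > R$ I would run a descent of $f$ starting at $X$ — a curve of maximal slope, i.e.\ a trajectory of the subgradient dynamics $\dot{Y} \in -\partial f(Y)$, which is available and well behaved because $f$ is semi-algebraic and hence a Kurdyka--{\L}ojasiewicz function. As long as the trajectory stays in the open set $\{Y : \|Y\| > R\}$, its slope is at least $c$ by Assumption~\ref{ass:regularity}, so $f$ decreases along it at rate at least $c$ per unit of arclength; since $f \ge 0$, the trajectory cannot remain in that set indefinitely, and since $\cS \subseteq \bar{\mathbb{B}}(0, R)$ it must first reach the boundary sphere $\|\cdot\| = R$ after arclength $s_0 \le \tfrac{1}{c} f(X)$, at a point $Z$ with $\|Z\| = R$ and $f(Z) \le f(X)$. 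In particular $\|X - Z\| \le s_0 \le \tfrac{1}{c} f(X)$.

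It remains to glue the two pieces. For $\|X\| > R$, the previous paragraph lands us at $Z \in \bar{\mathbb{B}}(0, R)$ with $f(Z) \le f(X)$, so the bound of Proposition~\ref{prop:EB} applies at $Z$ and, using that $t \mapsto t^{\tau}$ is nondecreasing (here $\tau \in (0,1]$),
\[
    \dist(X, \cS) \le \|X - Z\| + \dist(Z, \cS) \le \tfrac{1}{c} f(X) + \tfrac{1}{c_1} f(Z)^{\tau} \le \tfrac{1}{c} f(X) + \tfrac{1}{c_1} f(X)^{\tau} .
\]
Combined with the case $\|X\| \le R$, this yields $c_0 \dist(X, \cS) \le f(X)^{\tau} + f(X)$ for all $X \in \Re^{m \times n}$ with $c_0 := \min\{c, c_1\}$. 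The genuinely delicate point is the descent step: one must make rigorous that the nonsmooth descent dynamics of $f$ are well defined and that the arclength consumed before the trajectory enters $\bar{\mathbb{B}}(0, R)$ is bounded by $\tfrac{1}{c} f(X)$. This slope-based travel-length estimate is exactly the mechanism behind \cite[Proposition~6.1]{DinhPham2017}; it implies in particular the error bound $\dist(X, \bar{\mathbb{B}}(0, R)) \le \tfrac{1}{c} f(X)$, which on its own follows by applying the slope-to-error-bound principle of \cite[Lemma~2.2]{DinhPham2017} (in its $\tau = 1$ form) to the lower semicontinuous function that equals $f$ on $\{Y : \|Y\| > R\}$ and vanishes on $\bar{\mathbb{B}}(0, R)$, whose zero set is $\bar{\mathbb{B}}(0, R)$ and whose slope exceeds $c$ off that set.
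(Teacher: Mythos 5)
The paper gives no proof of this theorem (it simply defers to \cite[Proposition~6.1]{DinhPham2017}), so your reconstruction can only be judged on its own merits. Your overall architecture is the intended one: compactness of $\cS$ from the assumption (the argument that any $X\in\cS=f^{-1}(0)$ is a global minimizer, hence $m_f(X)=0$, hence $\|X\|<R$, is clean and correct), the bounded regime handled by Proposition~\ref{prop:EB} on $\bar{\mathbb{B}}(0,R)$, and the unbounded regime handled by the slope lower bound $m_f\ge c$, which is exactly what produces the linear term $f(X)$.

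The soft spot is the landing point $Z$. Your gluing inequality needs \emph{both} $\|X-Z\|\le \tfrac{1}{c}f(X)$ \emph{and} $f(Z)\le f(X)$, and only the descent-trajectory version of the argument delivers the second property. But the existence of curves of maximal slope for this nonsmooth $f$, together with the chain rule $\tfrac{d}{dt}f(Y(t))=-\|\dot Y(t)\|^2$ needed for the arclength estimate, is nontrivial machinery that neither you nor the paper establishes ($f$ is a pointwise minimum of smooth functions, so its limiting and Clarke subdifferentials need not coincide, and the standard subgradient-flow results require care here). The fallback you offer --- applying \cite[Lemma~2.2]{DinhPham2017} to the truncated function $h$ --- only yields the distance estimate $\dist(X,\bar{\mathbb{B}}(0,R))\le\tfrac{1}{c}f(X)$ and gives no control on $f$ at the projection point, so the chain $\dist(Z,\cS)\le \tfrac{1}{c_1}f(Z)^\tau\le\tfrac{1}{c_1}f(X)^\tau$ breaks. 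The repair is elementary and avoids trajectories entirely: split at radius $R+1$ instead of $R$. For $\|X\|\le R+1$ apply Proposition~\ref{prop:EB} on the compact set $\bar{\mathbb{B}}(0,R+1)$. For $\|X\|>R+1$ the distance estimate gives $f(X)\ge c\,(\|X\|-R)\ge c$, so with $D:=\sup_{Z\in\bar{\mathbb{B}}(0,R)}\dist(Z,\cS)<\infty$ one has
\[
\dist(X,\cS)\le \dist\bigl(X,\bar{\mathbb{B}}(0,R)\bigr)+D\le \tfrac{1}{c}f(X)+\tfrac{D}{c}f(X)=\tfrac{1+D}{c}f(X),
\]
and the linear term alone absorbs the far region. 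With that substitution your proof is complete and rigorous.
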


\section{Conclusion}
In this paper, we constructed a concrete error bound for rank-constrained affine matrix sets. Our approach involved first defining a polynomial auxiliary function and establishing a key relationship between it and the problem's residual function. By applying the {\L}ojasiewicz inequality and its known exponent for polynomials, we then derived an explicit form for the error bound. We acknowledge that a limitation of this current result is its looseness. Therefore, future work will focus on identifying and verifying sufficient conditions under which a tighter, more practical error bound can be obtained. This remains a primary direction for our further research.


\end{document}